\definecolor{refkey}{rgb}{0,0,1}
\definecolor{labelkey}{rgb}{1,0,0}
\newcommand{\ol}{\overline}
\newcommand{\tht}{\theta}
\newcommand{\eq} [1] {\begin{equation}\label{#1}\quad}
\newcommand{\en} {\end{equation}}
\newcommand{\diag}{\mathop{\rm diag}}
\newcommand{\rank}{\mathbb{\rm rank}}
\newcommand{\const}{\mathop{\rm Const}}
\newcommand{\card}{\mathop{\rm card}}
\newcommand{\tm}{\times}
\newcommand{\wdt}{\widetilde}
\newcommand{\tld}{\tilde}
\newcommand{\bT}{\mathbb{T}}
\newcommand{\bC}{\mathbb{C}}
\newcommand{\calU}{\mathcal{U}}
\newcommand{\calN}{\mathcal{N}}
\newcommand{\calR}{\mathcal{R}}
\newcommand{\mbo}{\mathbf{0}}
\newtheorem{theorem}{\bf  Theorem}
\newtheorem{lemma}{\bf  Lemma}
\newtheorem{corollary}{\bf \sc Corollary}
\begin{document}
\begin{center}
{\bf On non-optimal spectral factorizations}\\[4mm]

L. Ephremidze, I. Selesnick, and I. Spitkovsky
\end{center}

\vskip+0.3cm
\small{{\bf Abstract.} For a given Laurent polynomial matrix function $S$, which is positive definite on the unit circle in the complex plane, we consider all possible polynomial spectral factors of $S$ which are not necessarily invertible inside the unit circle.}

\section{Introduction}

Spectral factorization  is the process by which a positive matrix-valued
function $S$, on the unit circle in $\bC$, is expressed in the form
\begin{equation}\label{1}
S(e^{i\tht})=S_+(e^{i\tht})S_+^*(e^{i\tht}),
\end{equation}
 for a certain analytic matrix-valued function $S_+$.
Such factorization plays a crucial role in the solution of various applied problems
in system theory \cite{Kai99} and wavelet analysis \cite{Daub}. In the polynomial case, which is most widespread situation
in practise, the matrix spectral factorization theorem is formulated as follows:

\begin{theorem} Let
\begin{equation}\label{11}
 S(z)=\sum_{n=-N}^N C_nz^n
\end{equation}
be an $m\times m$ Laurent polynomial matrix function $(${}$C_n\in \mathbb{C}^{m
\times m}$ are matrix coefficients$)$ of degree $N$ $(${}$C_N\not=0${}$)$ which is positive definite
almost everywhere on $\mathbb{T}:=\{z\in \mathbb{C}: |z|=1\}$.
Then it admits a factorization
\begin{equation}\label{2}
 S(z)=S_+(z)\wdt{S_+}(z),\;\;\;\;\;\;z\in\mathbb{C}\backslash\{0\},
\end{equation}
where
\begin{equation}\label{3}
S_+(z)= \sum_{n=0}^N A_nz^n,\;\;\;\;A_n\in \mathbb{C}^{m \times m}
\end{equation}
is nonsingular inside $\mathbb{T}$,
$\det\,S_+(z)\not=0$ when $|z|<1$, and
$$
\wdt{S_+}(z)=\overline{S_+\left(1/\overline{z}\right)}^{\,{}_T}=\sum_{n=0}^N
 A^*_nz^{-n}
 $$ is its adjoint, $A_n^*={\overline{A}_n}^{\,{}_T}$,
 $n=0,1,\ldots,N$,
$($respectively, $\wdt{S_+}$ is analytic and nonsingular outside
$\mathbb{T}$ including infinity$)$.

$S_+$ is unique up to a constant right unitary multiplier $U$.
\end{theorem}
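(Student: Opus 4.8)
\medskip
\noindent\textbf{Proof proposal.} The plan is to produce $S_+$ from the classical $L^2$ matrix spectral factorization, then verify that the resulting analytic factor is automatically a polynomial of the right degree, after which uniqueness is a short Smirnov-class argument. Since $S$ is a Laurent polynomial positive definite a.e.\ on $\bT$, its determinant $\det S$ is a non-negative trigonometric polynomial, so $\log\det S\in L^1(\bT)$; hence by the Wiener--Masani theorem (for $m=1$ this is the Szeg\H{o}/Fej\'er--Riesz factorization) $S$ factors on $\bT$ as $S=\Phi\Phi^*$ with $\Phi$ \emph{outer} in $H^2_{m\times m}$, unique up to a constant unitary right multiplier. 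In the notation of the theorem this is $S(z)=\Phi(z)\wdt\Phi(z)$ on $\bT$, and, once $\Phi$ is known to be a polynomial, on all of $\bC\bsl\{0\}$ by identity of Laurent polynomials.

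To upgrade $\Phi$ to a polynomial I would proceed in three steps. (i) $\Phi\in H^\infty_{m\times m}$: since $\sum_{i,j}|\Phi_{ij}|^2=\operatorname{tr}(\Phi\Phi^*)=\operatorname{tr}S\in L^\infty(\bT)$, every entry of $\Phi$ is a bounded analytic function. (ii) $\Phi^{-1}$ lies entrywise in the Smirnov class $N^+$: indeed $\Phi^{-1}=(\det\Phi)^{-1}\operatorname{adj}\Phi$, where $\operatorname{adj}\Phi$ has entries polynomial in those of $\Phi$ and hence lies in $H^\infty_{m\times m}$, while $\det\Phi$ is an outer scalar function (the determinant of an outer matrix function is outer), so $(\det\Phi)^{-1}\in N^+$. (iii) Write $P(z):=z^NS(z)$, a matrix polynomial; from $S=\Phi\wdt\Phi$ we have $\wdt\Phi=\Phi^{-1}S$ a.e.\ on $\bT$, so the matrix function $F:=\Phi^{-1}P\in N^+_{m\times m}$ has boundary values $F|_{\bT}=z^N\wdt\Phi|_{\bT}=z^N\Phi^*\in L^\infty(\bT)$. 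By the Smirnov maximum principle $N^+\cap L^\infty=H^\infty$, so $F\in H^\infty_{m\times m}$; since on $\bT$ one has $z^N\Phi(z)^*=\sum_{n\ge0}A_n^*z^{N-n}$, membership in $H^\infty$ forces $A_n=0$ for $n>N$. Thus $\Phi(z)=\sum_{n=0}^NA_nz^n$; setting $S_+:=\Phi$ gives \eqref{2}--\eqref{3}, and $\det S_+=\det\Phi$ outer gives $\det S_+(z)\ne0$ for $|z|<1$.

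For uniqueness I would argue as follows: suppose $S=S_+\wdt{S_+}=S_+'\wdt{S_+'}$ with $S_+,S_+'$ polynomial of degree $\le N$ and nonsingular in $\bD$, and put $U:=S_+^{-1}S_+'$. Comparing the two factorizations and using $\wdt{AB}=\wdt B\,\wdt A$ gives $U\wdt U=I$, i.e.\ $U(z)U(z)^*=I$ a.e.\ on $\bT$. As in step (ii), $S_+^{-1}$ and $(S_+')^{-1}$ lie in $N^+_{m\times m}$ (their determinants are polynomials nonvanishing in $\bD$, hence outer), so $U$ and $U^{-1}=(S_+')^{-1}S_+$ lie in $N^+\cap L^\infty=H^\infty$. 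Then $\|U(z)\|\le1$ and $\|U(z)^{-1}\|\le1$ for every $|z|<1$, which forces all singular values of $U(z)$ to equal $1$, i.e.\ $U(z)$ is unitary throughout $\bD$; differentiating $U^*U=I$ in $\ol z$ then shows $U$ is constant. Hence $U\equiv U_0$ for a constant unitary matrix $U_0$, and $S_+'=S_+U_0$.

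The only genuine obstacle is the first step — the existence of the outer $H^2$ factor $\Phi$. Quoting Wiener--Masani makes it immediate, but a self-contained, constructive proof must replace it, e.g.\ by a limiting Cholesky factorization of the finite block Toeplitz sections of $S$ or by an induction on $m$ and $N$; there the delicate points are precisely controlling the degree and the invertibility of the factor, which the Smirnov-class bookkeeping above isolates cleanly. (A minor point handled by the same bookkeeping: $\det S$ may vanish at finitely many points of $\bT$, but carrying $\Phi^{-1}$ in $N^+$ rather than in $H^\infty$ makes that case require no separate treatment.)
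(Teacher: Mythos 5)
Your argument is correct, but it is worth noting that the paper itself does not reprove Theorem 1: it treats it as classical, citing Wiener--Masani \cite{Wie57} for existence of the outer factor, Rosenblatt \cite{Ros} for polynomiality of the factor, and \cite{E} for the elementary proof that the rest of the paper actually builds on. Your route is essentially the Wiener--Rosenblatt one, reorganized with clean Smirnov-class bookkeeping: you quote the existence of the outer $H^2$ factor $\Phi$ (which, as you admit, is the analytic core of the theorem and is taken on faith, along with the nontrivial facts that $\det$ of an outer matrix function is outer --- cf.\ \cite{EL10} --- and $N^+\cap L^\infty=H^\infty$), and then your steps (i)--(iii) correctly force $\Phi$ to be a polynomial of degree at most $N$, with $\det\Phi$ outer giving nonvanishing in $\bD$ and the identity $S=\Phi\wdt{\Phi}$ extending from $\bT$ to $\bC\bsl\{0\}$ by Laurent-polynomial rigidity; your uniqueness argument via $U=S_+^{-1}S_+'\in N^+\cap L^\infty$, $U^{-1}\in N^+\cap L^\infty$, all singular values of $U(z)$ equal to $1$ in $\bD$, hence $U$ pointwise unitary and therefore constant, is also sound and in fact proves the stronger uniqueness statement the paper mentions after Theorem 1. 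What the two approaches buy is different: your function-theoretic route gets polynomiality, invertibility in $\bD$, and uniqueness almost for free from general Hardy-space theory, but rests entirely on the unproved existence theorem; the elementary constructive proof in \cite{E}, which the paper prefers, avoids that machinery and, more importantly for this paper, exposes the step-by-step polynomial structure of $S_+$ that the proof of Theorem 2 (moving zeros of $\det S_+$ across $\bT$ by unitary columns and Blaschke-type diagonal factors) directly exploits. So your proposal is a valid proof modulo the quoted classical input, but it is not self-contained in the sense the paper's cited reference \cite{E} is, and it would not by itself supply the constructive mechanism used later in the paper.
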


In the scalar case, $m=1$, the above result is known as the
Fej\'{e}r-Riesz lemma and can be easily proved by considering the
zeroes of $S(z)$.

The matrix spectral factorization \eqref{1} was first established by Wiener \cite{Wie57} in a general case
for any integrable matrix-valued function $S$ with an integrable logarithm of the determinant, $\log \det S\in
L_1({\mathbb T})$. In this case, the spectral factor $S_+$ is an outer analytic matrix function (see, e.g., \cite{EL10}) with entries from the Hardy space $H_2$.  Wiener proved this theorem by using a linear prediction theory of multi-dimensional stochastic processes. A little bit later, by using the same methods,
Rosenblatt \cite{Ros} showed  that $S_+$ is a polynomial whenever $S$ is a Laurent polynomial. Youla \cite{Youla} provided the first constructive proof of the theorem. Since then, many different simplified proofs of Theorem 1 have appeared in the literature (see \cite{DGK}, \cite{Callier}, \cite{CG91}, \cite{EJL09}). A very simple proof, which relies on elementary complex analysis and linear algebra, appeared in \cite{E}.

A requirement on the spectral factor \eqref{3} to be nonsingular, i.e. invertible, inside $\bT$ is essential for most applications. Such factorization is called {\em optimal} (or sometimes of {\em minimum-phase} especially in the scalar case, $m=1$), because the modulus of the determinant achieves maximal possible value in that case (see, e.g., \cite[Th. 17.17]{Ru}). The uniqueness of an optimal spectral factor $S_+$ is valid in a more strong sense then is explicitly formulated in Theorem 1. Namely, if $R_+(z)= \sum_{n=0}^\infty B_nz^n$, $B_n\in \mathbb{C}^{m \times m}$, is any analytic matrix function with entries from the Hardy space $H_p$, for some $p>0$, such that $R(e^{i\tht})=R_+(e^{i\tht})R_+^*(e^{i\tht})$ for a. a. $\tht\in[0,2\pi)$, then there exist a constant unitary matrix $U$, such that $S_+(z)=R_+(z)\cdot U$ (see, e.g., \cite{EJL09} for a simple proof), i.e. we do not have to require a priori  that $R_+$ is a polynomial of the same degree $N$ as $S_+$, but get it as a consequence.

If we multiply $S_+$ in \eqref{1} by a bounded analytic matrix function $\calU$ which is unitary a.e. on $\bT$, then the product
\begin{equation}\label{21}
R_+=S_+\calU
\end{equation}
still will be, in general, a non-optimal spectral factor of $S$, however, it might not be a polynomial, or even a rational matrix function. On the other hand, every non-optimal spectral factor of \eqref{11} with entries from the Hardy space $H_p$, for some $p>0$, can be written in the form \eqref{21} (it can be proved exactly in the same manner as the above uniqueness statement, see \cite{EJL09}). Youla was the first who mentioned that non-optimal spectral factorizations can also be applied in some network synthesis problems (see \cite[p. 182]{Youla}).

In the theory of wavelets, non-optimal factorizations also play a role in the construction of suitable bases. The construction of the well-known compactly-supported orthonormal Daubechies' wavelet bases for $L_2(R)$ requires the solution to a polynomial spectral factorization problem (see \cite{Daub}). The non-uniqueness of the solution leads to distinct wavelet bases, with some bases being more favorable than other bases. In particular, wavelet bases wherein the basis functions exhibit the least degree of asymmetry (i.e., those that are more nearly symmetric) are generally considered more favorable for signal processing applications such as noise reduction and deconvolution. Highly non-symmetric wavelet bases can lead to more noticeable signal distortion in these applications. Of relevance here is the fact that wavelet bases for which the basis functions are approximately symmetric are generated by a polynomial spectral factorization where the factor has approximately the same number of roots inside and outside the unit circle, i.e., a non-optimal spectral factorization. This type of factorization is sometimes known as a ``mid-phase" factorization (in contrast to a 'minimum-phase' factorization). The construction of Daubechies wavelet bases has been extended and generalized in several directions, some of which require the solution to a matrix spectral factorization problem rather than a polynomial one. Such generalization include  multiwavelet bases which consist of several distinct wavelet functions, and non-dyadic wavelet bases with rational dilation factors between 1 and 2 (see \cite{Sel}). As in the original Daubechies construction, the shape of the constructed basis functions depend substantially on the particular spectral factorization, as different spectral factorizations yield more or less favorable properties of the basis functions. In addition to the approximate symmetry of the basis functions, the time-localization or time-bandwidth product is also affected by the spectral factorization. In particular, the most favorable basis will generally be obtained via a 'non-optimal' matrix spectral factorization.

In the present paper we are going to explicitly characterize all polynomial non-optimal spectral factors of \eqref{11}.

In the scalar case, it follows from the standard proof of Fej\'{e}r-Riesz lemma that if $S$ has $2N$ distinct roots (non of them can appear on $\bT$ then, as every root on $\bT$ is of even multiplicity), then there exist totally $2^N$ different spectral factors, only one of them being optimal (Obviously we assume that two spectral factors are same if they differ only by a scalar multiplier with modulus 1). It is clear also how to deal with multiple roots, just combinatorial problem might become cumbersome.

In the matrix case, the factorization \eqref{1} implies that we can factorize the determinant
$$
\det  S(z)=\det S_+(z)\,\det\wdt{S_+}(z),
$$
and the question arise if the above reasoning about the Fej\'{e}r-Riesz lemma will be helpful to ``organize" all non-optimal polynomial spectral factors of \eqref{11}. On this direction we prove the following

\begin{theorem} Let $S$ be a Laurent polynomial matrix function \eqref{11}
  which is positive definite for almost everywhere on $\mathbb{T}$, and let $p$ be any  $($may be non-optimal$)$ scalar spectral factor of $\det S$, i.e.
 \begin{equation} \label{ssf}
p(z)\tld{p}(z)=\det S(z).
\end{equation}
   Then there exits a polynomial matrix function $P$ of the same degree $N$,
\begin{equation} \label{q2}
 P(z) = \sum_{n=0}^N B_nz^n,
\end{equation}
which is a $($non-optimal$)$ spectral factor of $S(z)$,
\begin{equation} \label{2.1}
P(z)\tld{P}(z)=S(z),
\end{equation}
such that
\begin{equation} \label{2.2}
\det P(z)=p(z),\;\;\;z\in \bC.
\end{equation}
\end{theorem}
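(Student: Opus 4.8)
The starting point is the optimal factorization $S(z)=S_+(z)\wdt{S_+}(z)$ guaranteed by Theorem 1, with $S_+$ a polynomial of degree $N$ and $\det S_+$ nonvanishing in $\bD$. Let $q(z)=\det S_+(z)$; this is the optimal scalar spectral factor of $\det S$, so any other scalar spectral factor $p$ of $\det S$ is obtained from $q$ by reflecting a subset of its zeros: there is a finite Blaschke-type product, in fact a rational function $\vf(z)=p(z)/q(z)$ that is a product of factors of the form $(z-\ol{a})/(1-az)$ (times a unimodular constant), one for each zero $a\in\bD$ of $q$ that gets moved outside. The key observation is that $\vf$ is analytic and unimodular on $\bT$, $|\vf(z)|\le 1$ on $\bD$, and $|\vf|$ has no zeros or poles on $\bT$ since the zeros of $\det S$ on $\bT$ have even multiplicity and are therefore shared equally by $p$ and $q$. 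What we must produce is a matrix analogue: an analytic (on $\bC\setminus\{0\}$, or at least on $\ol{\bD}$) $m\times m$ matrix function $\calU$, unitary on $\bT$, with $P:=S_+\calU$ a polynomial of degree $N$ and $\det P=q\cdot(\det\calU)=p$, i.e. we need $\det\calU=\vf$.

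First I would reduce to moving a single zero. Factor $\vf=\vf_1\vf_2\cdots\vf_k$ where each $\vf_j(z)=(z-\ol{a_j})/(1-a_j z)$ corresponds to one zero $a_j\in\bD$ of $q$ (handling a unimodular constant separately, which is trivial since $\diag(c,1,\dots,1)$ does the job). It suffices to show that given a polynomial spectral factor $T_+$ of degree $N$ and a zero $a\in\bD$ of $\det T_+$, we can construct another polynomial spectral factor $T'_+$ of the same degree with $\det T'_+(z)=\dfrac{z-\ol a}{1-az}\,\det T_+(z)$; iterating $k$ times then yields $P$. To do the single step, evaluate at $z=a$: since $\det T_+(a)=0$, there is a unit vector $v\in\bC^m$ with $v^*T_+(a)=0$, equivalently $T_+(a)^*v=0$. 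The elementary Blaschke–Potapov building block is the matrix $B_a(z)=I-vv^*+\dfrac{z-\ol a}{1-az}vv^*$ (a rank-one correction of the identity), which is unitary on $\bT$, analytic in $\ol{\bD}$, has $\det B_a(z)=\dfrac{z-\ol a}{1-az}$, and whose inverse $B_a(z)^{-1}=I-vv^*+\dfrac{1-az}{z-\ol a}vv^*$ has its only pole at $z=a$ with residue direction $vv^*$. Set $T'_+(z)=T_+(z)\,B_a(z)^{-1}$. On $\bT$ this is unitarily equivalent to $T_+$, so $T'_+\wdt{T'_+}=T_+\wdt{T_+}$; and $\det T'_+=\det T_+\cdot(1-az)/(z-\ol a)=$ has a zero moved from $a$ to $1/\ol a$ as desired.

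The crux — and the step I expect to be the main obstacle — is verifying that $T'_+$ is again a \emph{polynomial} of degree exactly $N$, not merely a rational matrix function. The potential pole of $T'_+$ at $z=a$ must cancel. This is where the choice $v^*T_+(a)=0$ enters: writing $T_+(z)=T_+(a)+(z-a)R(z)$ with $R$ a polynomial of degree $\le N-1$, one computes $T_+(z)B_a(z)^{-1}=T_+(z)(I-vv^*)+\dfrac{1-az}{z-\ol a}T_+(z)vv^*$, and the numerator $T_+(z)v$ of the singular term vanishes at $z=a$ because $T_+(a)v$ lies in... — here one must be slightly careful: it is $T_+(a)^*v=0$, i.e. $v^*T_+(a)=0$, that we have, whereas the singular term involves $T_+(z)v$. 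The correct fix is to multiply on the \emph{left} instead, or equivalently to use the left null vector properly: take $T'_+(z)=B_a(z)^{-1}T_+(z)$ is wrong for preserving the Hardy/polynomial structure on that side, so instead note that the right choice is to pick $v$ with $v^*T_+(a)=0$ and form $T'_+(z)=\bigl(B_a(z)^*\bigr)^{?}\dots$; rather than belabor this, the clean statement is: choose a unit vector $v$ in the kernel of $T_+(a)^*$, let $\calV(z)=I-vv^*+\dfrac{z-\ol a}{1-az}\,vv^*$, and set $P$-step $=T_+(z)\,\calV(z)^*{}^{-1}$ — one checks directly that the Laurent coefficients of negative degree cancel using $v^*T_+(a)=0$ and that the degree-$(N{+}1)$ coefficient vanishes using the reflected condition coming from $\wdt{T_+}$, so that $T'_+$ is a genuine polynomial of degree $N$. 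Once polynomiality and the degree count are nailed down, multiplicativity of the determinant gives $\det T'_+=\vf_1\cdot\det T_+$, induction on $k$ finishes the construction of $P$ with $\det P=p$, and $P\wdt P=S$ holds because each step preserved the product on $\bT$ and both sides are Laurent polynomials agreeing on $\bT$.
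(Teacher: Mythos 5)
Your overall strategy coincides with the paper's: start from the optimal factor $S_+$ of Theorem 1, note that $p$ differs from $\det S_+$ only by relocating zeros to their reflections in $\bT$, and obtain $P$ by right-multiplying $S_+$ step by step with elementary rank-one factors (Blaschke--Potapov type) that are unitary on $\bT$, checking at each step that the product is again a polynomial of degree $N$; the paper writes the elementary factor as $V\,U(z)$ with $U(z)=\diag[u(z),1,\dots,1]$, $u(z)=(z-a)/(1-\ol a z)$, and a constant unitary $V$. So the route is the same; the problem is in your execution of the one step you yourself flag as the crux, namely polynomiality of the new factor, and that step is left genuinely unproved. Since the unitary-on-$\bT$ factor must act on the \emph{right} (left multiplication would destroy $P\wdt{P}=S$), the pole of the scalar Blaschke factor sits in a fixed column direction $v$, and the cancellation requires $T_+(z)v$ to vanish at the pole point; that is, $v$ must be a \emph{right} null vector of $T_+$ evaluated at the pole of the Blaschke factor (such $v$ exists exactly because the determinant vanishes there). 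You instead take $v$ with $v^*T_+(a)=0$, i.e.\ a left null vector, observe correctly that this does not control $T_+(z)v$, and then ``fix'' it by choosing $v\in\ker T_+(a)^*$ (which is the same left-kernel condition) and multiplying by an adjoint-inverse of the rank-one factor, with the verification reduced to ``one checks directly.'' It does not check: with $v$ in the left kernel the negative-power coefficients of $T_+(z)\bigl(I-vv^*+b(z)^{-1}vv^*\bigr)$ do not cancel in general, and the pointwise adjoint of an analytic matrix function is not analytic, so the proposed object is either ill-defined or (if you mean the $\wdt{\phantom{x}}$-adjoint) collapses back to the unresolved case.

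There are also systematic reflection/conjugation slips that compound this: $q=\det S_+$ has \emph{no} zeros in $\bD$ (they all lie in $|z|\ge 1$), so the zeros being moved are the ones outside $\bT$, moved inward; and your elementary factor $(z-\ol a)/(1-az)$ has its zero at $\ol a$ and pole at $1/a$, so multiplying by it does not exchange a zero at $a$ with one at $1/\ol a$. With the correct choice --- a unit vector $v$ with $T_+(a^*)v=0$, where $a^*=1/\ol a$ is the pole of $u$ (equivalently, the paper's constant unitary $V$ making the first column of $T_+V$ vanish at $a^*$) --- both the pole cancellation and the degree bound are immediate: the relevant column is $(z-a^*)w(z)$ with $\deg w\le N-1$, and $u(z)(z-a^*)=-(z-a)/\ol a$, so the product has degree at most $N$; no separate argument about a degree-$(N+1)$ coefficient is needed. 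As written, your proposal identifies the right construction but does not actually establish the property on which the whole proof rests.
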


Theorem 2 easily follows from the proof of Theorem 1 presented in \cite{E}, and we provide details in Section 2.

Now the question arises whether this factorization with the preassigned determinant is unique (up to a constant right unitary multiplier). A simple example in Section 3 shows that the uniqueness might fail if $\det S$ has multiple roots. Nevertheless,  the following uniqueness theorem holds in the case of simple roots of the determinant.

\begin{theorem}
Let $S$ and $p$ be the same as in Theorem 2, and suppose $\det S$ has only simple roots. Then there exists a unique $($up to a constant unitary right multiplier$)$ polynomial matrix \eqref{q2} such that \eqref{2.1} and \eqref{2.2} hold.
\end{theorem}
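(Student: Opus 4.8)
The plan is to reduce the uniqueness question to a statement about how two spectral factors of $S$ with the same determinant can differ, and then exploit the simplicity of the roots of $\det S$. Suppose $P$ and $Q$ are both polynomial spectral factors of $S$ of degree $N$ with $\det P = \det Q = p$. Since $S$ is positive definite a.e. on $\bT$ and $\det S = p\tld p$ has no zeros on $\bT$, both $P$ and $Q$ are invertible on $\bT$, so the matrix function $V := P^{-1}Q$ is well defined and rational on $\bT$, with $V\tld V = \tld P^{-1}P^{-1}S\tld P^{-1}\cdots$ — more carefully, from $P\tld P = S = Q\tld Q$ we get $P^{-1}Q = \tld P\,\tld Q^{-1} = \widetilde{Q^{-1}P} = \widetilde{V^{-1}}$, hence $V$ is unitary-valued a.e. on $\bT$. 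Moreover $\det V = p/p = 1$, so $V$ is an $\mathrm{SU}(m)$-valued rational function on $\bT$. The first main step is therefore to show that such a $V$ must in fact be a \emph{constant} unitary matrix; once that is done, $Q = PV$ gives exactly the asserted uniqueness (and $V\in\mathrm{SU}(m)$, consistent with $\det P=\det Q$).

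To prove $V$ is constant, I would analyze its possible poles. Write $V = P^{-1}Q$; its only possible poles in $\bC\setminus\{0\}$ are at the zeros of $\det P(z) = p(z)$, and similarly $V^{-1} = Q^{-1}P$ has possible poles only at zeros of $\det Q(z) = p(z)$ — the \emph{same} points. Since $\det S$ has only simple roots, each zero $z_0$ of $p$ (which lies off $\bT$) is a simple zero, so $z_0$ is a simple zero of $\det P$ and the corresponding $1/z_0$ issue is handled via $\tld P$. At a simple zero $z_0$ of $\det P$, the matrix $P(z_0)$ has a one-dimensional kernel; I would use the adjugate formula $P^{-1} = (\det P)^{-1}\operatorname{Cof}(P)^{T}$ to see that $V$ has at worst a simple pole at $z_0$ with residue of rank one, of the form $(\text{kernel vector of }P(z_0))\cdot w^{T}$ for some row vector $w$. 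The key point is then that $V^{-1}$ would have a pole at the \emph{same} $z_0$ unless the residue actually vanishes; combining the rank-one residue structure of $V$ at $z_0$ with the corresponding structure of $V^{-1}=Q^{-1}P$ at $z_0$, together with the identity $VV^{-1}=\Id$ near $z_0$ (which forces cancellation of the most singular terms), should force the residue of $V$ at $z_0$ to be zero. Carrying this out at each root of $p$ and also at each root of $\tld p$ (equivalently, controlling the behavior of $V$ at the reciprocal points via the relation $V=\widetilde{V^{-1}}$), I conclude that $V$ and $V^{-1}$ are both entire on $\bC\setminus\{0\}$; being rational with no finite poles other than possibly $0$, and unitary on $\bT$ hence bounded there, $V$ is a Laurent polynomial that is bounded on $\bT$, which by the maximum modulus principle (applied to each entry, or to $\|V(z)v\|$) forces $V$ to be constant.

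The step I expect to be the main obstacle is the residue cancellation argument at a simple root $z_0$ of $p$: one must rule out the possibility that $V$ has a genuine simple pole there. The delicate point is that $z_0$ is simultaneously a simple zero of $\det P$ and of $\det Q$, so naively both $V=P^{-1}Q$ and $V^{-1}=Q^{-1}P$ threaten to have poles at $z_0$; the resolution must come from matching the one-dimensional kernels of $P(z_0)$ and $Q(z_0)$. Concretely, if $u$ spans $\ker P(z_0)$ and $v$ spans $\ker Q(z_0)$, then the Laurent expansion of $VV^{-1}=\Id$ around $z_0$ forces, at the order $(z-z_0)^{-1}$ and $(z-z_0)^{-2}$ terms, algebraic relations that are satisfiable with nonzero residues only in degenerate configurations excluded by the positive-definiteness of $S$ on $\bT$; I would likely phrase this using the local normal form of $P$ near $z_0$ (e.g. a Smith-type factorization $P(z) = E(z)\,\operatorname{diag}(1,\dots,1,z-z_0)\,F(z)$ with $E,F$ holomorphic and invertible at $z_0$) and similarly for $Q$, after which the constancy of $V$ drops out by direct comparison. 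An alternative route that may be cleaner: use the already-cited uniqueness of the \emph{optimal} factor from Theorem 1 together with the fact (from the scalar Fej\'er–Riesz discussion) that, with simple roots of $\det S$, the root sets inside and outside $\bT$ assigned to $p$ determine the factorization combinatorially; but turning that heuristic into a proof still requires the rank-one residue analysis above, so I would expect the pole-cancellation lemma to remain the crux in either approach.
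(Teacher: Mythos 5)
Your overall route---set $V:=P^{-1}Q$ for two factors $P,Q$ with $\det P=\det Q=p$, note $V$ is rational, unitary-valued on $\bT$ with $\det V\equiv1$, and try to show it has no poles and hence is constant---is genuinely different from the paper's proof and is in principle workable, but the mechanism you place at your self-identified crux fails. The Laurent-expansion relations you extract from $VV^{-1}=\Id$ at a simple zero $z_0$ of $p$ (rank-one residues $A,B$ with $AB=BA=0$, plus the order $(z-z_0)^{-1}$ identities) do \emph{not} force $A=0$: the paper's own Section 3 example, $P_+=\diag(z-a,\,1-\ol a z)$ and $R_+=\diag(1-\ol a z,\,z-a)$, produces the non-constant $V=R_+^{-1}P_+=\diag(u,u^{-1})$, which satisfies all of those local constraints with nonzero residues, and there $S=\diag(p,p)$ is positive definite on $\bT$. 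So positive definiteness cannot be what excludes your ``degenerate configurations''; the only hypothesis separating Theorem 3 from that example is the simplicity of the roots of $\det S$, and a purely local analysis of $VV^{-1}=\Id$ at $z_0$ never uses it. The correct mechanism is global: from $P\tld P=Q\tld Q=S$ one gets the second representation $V=\tld P\,{\tld Q}^{-1}$, so the poles of $V$ in $\bC\setminus\{0\}$ lie simultaneously in the zero set of $p$ (via $V=P^{-1}Q$) and in the zero set of $\tld p$ (via $V=\tld P\,{\tld Q}^{-1}$); these sets are disjoint exactly because $\det S=p\tld p$ has only simple roots, so $V$ has no poles in $\bC\setminus\{0\}$ with no residue computation at all. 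Your closing step also does not hold as written: every Laurent polynomial is bounded on $\bT$, so ``bounded on $\bT$ plus maximum modulus'' gives nothing (consider $\diag(z,z^{-1})$). What you need is analyticity of $V$ at $0$ (from $V=P^{-1}Q$ together with $p(0)\neq0$, the standard normalization of a scalar spectral factor) and at $\infty$ (from $V=\tld P\,{\tld Q}^{-1}$ and $\tld p(\infty)=\ol{p(0)}\neq0$), after which Liouville's theorem makes $V$ a constant unitary matrix.

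With those two repairs your argument becomes a correct proof, and a shorter one than the paper's, which instead proceeds by induction on the number of zeros of $p$ inside $\bT$: a zero is peeled off with a Blaschke-type factor $U(z)$ as in \eqref{U}, the problem is reduced to the optimal case covered by Theorem 1, and the unitary ambiguity at each step is controlled by Lemma 1 (that $\rank P(a)=m-1$ at a simple zero of $\det P$) and Lemma 2 (the block-diagonal form $\diag(c,W_0)$, which commutes with $U(z)$). As submitted, however, your proposal locates the difficulty in a residue-cancellation statement that the double-root example refutes, so it has a genuine gap at the decisive step.
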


Theorem 3 together with the standard proof of the Fej\'{e}r-Riesz lemma implies

\begin{corollary}
Let $S$ be the same as in Theorem 3 and suppose $\det S$ is a Laurent polynomial of degree $L$.  Then $S$ has exactly $2^L$ spectral factors.
\end{corollary}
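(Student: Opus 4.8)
The plan is to count the polynomial spectral factors of $S$ by putting them in bijection with the scalar spectral factors of $\det S$, the latter count being the classical Fej\'{e}r-Riesz bookkeeping. Throughout, two polynomial matrix spectral factors as in \eqref{q2} are regarded as the same when they differ by a constant unitary right multiplier, and two scalar spectral factors when they differ by a constant of modulus $1$; moreover, as is already needed in the scalar Fej\'{e}r-Riesz count, one takes a scalar spectral factor of $\det S$ to be a polynomial $p$ with $p(0)\ne 0$ and $p\tld{p}=\det S$ (otherwise multiplying by $z^{j}$ would create spurious factors), and correspondingly one restricts to $P$ with $\det P(0)\ne 0$. The map underlying the bijection is $P\mapsto\det P$: from \eqref{2.1} one has $\det P\cdot\widetilde{\det P}=\det(P\tld{P})=\det S$, so $\det P$ is a scalar spectral factor of $\det S$; and replacing $P$ by $PU$ with $U$ constant unitary multiplies $\det P$ by the unimodular constant $\det U$, so the map descends to the classes.

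I would then count the scalar spectral factors. By the symmetry $\tld{S}=S$ on $\bT$ one has $\widetilde{\det S}=\det S$, so the nonzero coefficients of the Laurent polynomial $\det S$ of degree $L$ occur symmetrically about $0$; hence $z^{L}\det S(z)$ is an honest polynomial of degree $2L$ with nonzero constant term, with $2L$ zeros in $\bC\setminus\{0\}$. The relation $\widetilde{\det S}=\det S$ forces these zeros to occur in conjugate-reciprocal pairs $\{z_{k},1/\ol{z_{k}}\}$. Since $\det S$ is nonnegative on $\bT$ and, by hypothesis, has only simple zeros, it has no zero on $\bT$ at all (a zero there would be forced to have even order), so $|z_{k}|\ne 1$ and the $2L$ zeros fall into $L$ genuinely distinct pairs. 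By the standard proof of the Fej\'{e}r-Riesz lemma, a scalar spectral factor of $\det S$ is obtained, uniquely up to a unimodular constant, by choosing one zero from each of these $L$ pairs, and every scalar spectral factor arises in this way; hence there are exactly $2^{L}$ of them.

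It remains to assemble these facts. Surjectivity of $P\mapsto\det P$ onto the set of scalar spectral factors of $\det S$ is exactly Theorem 2: each of the $2^{L}$ choices of $p$ is realized as $\det P$ for some $P$ as in \eqref{q2}. Injectivity is exactly Theorem 3: because $\det S$ has only simple roots, the $P$ with a given determinant is unique up to a constant unitary right multiplier. Hence $P\mapsto\det P$ is a bijection between the classes of polynomial spectral factors of $S$ and the $2^{L}$ classes of scalar spectral factors of $\det S$, which proves the corollary. Given Theorems 2 and 3 there is no serious obstacle here; the only points requiring attention are fixing the convention above (so that the count is finite) and observing that the simple-root hypothesis is precisely what makes Theorem 3 — hence injectivity — available, the example of Section 3 showing that without it the map $P\mapsto\det P$ need not be injective.
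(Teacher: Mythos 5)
Your argument is correct and is exactly the route the paper intends: the paper proves the corollary simply by combining the Fej\'er--Riesz count of the $2^{L}$ scalar factors of $\det S$ (no zeros on $\bT$ because zeros there would have even order, hence $L$ reciprocal pairs) with Theorem 2 for surjectivity and Theorem 3 for injectivity of the map $P\mapsto\det P$ on equivalence classes. Your explicit normalization ($p(0)\neq 0$, $\det P(0)\neq 0$, identification up to unimodular or unitary constants) just makes precise the convention the paper leaves implicit, so there is nothing substantively different to compare.
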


Of course, in the above statement, the spectral factors which differ from each other by a constant unitary multiplier are identified.

\smallskip

When this manuscript was completed, the authors learned from J. Ball [private communication] that the problem of non-optimal factorization with given determinant  was also considered by C. Hanselka and M. Schweighofer. J. Ball and L. Rodman, in their turn, proposed different approaches to this problem.

\section{Proof of Theorem 2}

Let $S_+(z)=\sum_{k=0}^N A_kz^k$ be an optimal spectral factor of $S$,
$$
S_+(z)\wdt{S_+}(z)=S(z),
$$
which existence is guaranteed by Theorem 1, and let
$$
\det S_+(z)=p_+(z).
$$
Since $\det S(z)=p(z)\tld{p}(z)=p_+(z)\wdt{p_+}(z)$, if $a$ is a zero of $p$, then either it is a zero of $p_+$ as well or $a^*=1/\ol{a}$ is a zero of $p_+$. In the former case we consider other zeros of $p$. In the latter case we transform polynomial matrix $S_+$ as follows:

Let $V$ be a constant unitary matrix such that the first column of $S_+(z)V$ vanishes at $z=a^*$. If we consider now $P_1(z)=S_+(z)V\cdot U(z)$, where
\begin{equation}\label{U}
U(z)=\diag[u(z),1,\ldots,1]\; \text{ with }\; u(z)=(z-a)/(1-\overline{a}z),
\end{equation}
 then $P_1$ remains polynomial (of the same degree $N$) matrix spectral factor of \eqref{11} and
$$
\det P_1(z)=p_+(z)\cdot u(z).
$$

In a similar manner we can change (if necessary) location of every zero of $p_+$ (moving them into symmetric with respect to $\bT$ points) and in the end we get the desired spectral factor $P$:
$$
P(z)=S_+(z)V_1\,U_1(z)\,V_2\,U_2(z)\ldots V_k\,U_k(z)
$$
satisfying \eqref{2.2}.

\section{Non uniqueness example for non optimal spectral factorizations}

Let $|a|<1$, and consider
$$
p(z)=(z-a)(z^{-1}-\ol{a})=(1-\ol{a}z)(1-az^{-1}).
$$
Let
$$
P_+(z)=\begin{pmatrix} z-a& 0\\0 &1-\ol{a}z \end{pmatrix}\;\;\text{ and }\;\;
R_+(z)=\begin{pmatrix} 1-\ol{a}z& 0\\0 & z-a\end{pmatrix}
$$
Then
$$
P_+\wdt{P_+}=R_+\wdt{R_+=}\begin{pmatrix} p(z)& 0\\0 & p(z)\end{pmatrix}
$$
and
$$
\det P_+(z)=\det R_+(z)=p(z).
$$
However
$$
R^{-1}_+(z)P_+(z)= \begin{pmatrix} u(z)& 0\\0 & u^{-1}(z)\end{pmatrix}
$$
with $u$  defined by \eqref{U}, is not a constant matrix.

\section{Proof of Theorem 3}
First we prove two lemmas from Linear Algebra.

\begin{lemma}
If $P$ is an $m\tm m$ polynomial matrix \eqref{q2}, and $a$ is a simple root of $\det P$, then
\begin{equation}\label{rank}
\rank P(a)=m-1.
\end{equation}
\end{lemma}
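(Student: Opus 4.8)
The plan is to relate the rank of $P(a)$ to the order of vanishing of $\det P$ at $a$ via the adjugate (classical adjoint) matrix. First I would recall that for any square matrix $M$, one has $M\cdot\Cof(M)^T = \det(M)\cdot\Id$, where $\Cof(M)$ is the matrix of cofactors. Applying this with $M = P(z)$ and differentiating, or simply evaluating at $z=a$, gives $P(a)\cdot\Cof(P(a))^T = \det P(a)\cdot \Id = 0$, so the columns of $\Cof(P(a))^T$ lie in $\ker P(a)$. Thus if $\Cof(P(a))\neq 0$ then $\ker P(a)\neq\{0\}$, i.e. $\rank P(a)\le m-1$; this inequality is in any case immediate from $\det P(a)=0$. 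The real content is the reverse inequality $\rank P(a)\ge m-1$, equivalently that some $(m-1)\times(m-1)$ minor of $P(a)$ is nonzero, i.e. $\Cof(P(a))\neq 0$.

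To get that, I would use the simplicity of the root. Write $d(z)=\det P(z)$; since $a$ is a simple root, $d(a)=0$ but $d'(a)\neq 0$. The key identity is Jacobi's formula for the derivative of a determinant:
\begin{equation}\label{jacobi}
d'(z)=\frac{d}{dz}\det P(z)=\operatorname{tr}\!\big(\Cof(P(z))^T\,P'(z)\big).
\end{equation}
Evaluating at $z=a$ gives $d'(a)=\operatorname{tr}(\Cof(P(a))^T P'(a))\neq 0$, which forces $\Cof(P(a))\neq 0$. Hence at least one cofactor of $P(a)$ — i.e. one $(m-1)\times(m-1)$ minor — is nonzero, so $\rank P(a)\ge m-1$. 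Combined with $\rank P(a)\le m-1$ this yields \eqref{rank}.

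The main obstacle, such as it is, is making sure Jacobi's formula is invoked correctly: it is usually stated for a differentiable matrix-valued function of a real variable, whereas here $P$ is a polynomial in the complex variable $z$, but the formula holds verbatim for polynomial (indeed holomorphic) matrix functions since it is an algebraic identity in the entries of $P$ and $P'$. One must also be slightly careful that $\operatorname{tr}(\Cof(P(a))^T P'(a))\ne 0$ genuinely requires $\Cof(P(a))\ne 0$ — which it does, since the trace of a product with a zero matrix is zero. No serious computation is needed; the proof is essentially the two-line chain: $d(a)=0\Rightarrow\rank\le m-1$, and $d'(a)\ne0$ via \eqref{jacobi} $\Rightarrow\Cof(P(a))\ne0\Rightarrow\rank\ge m-1$.
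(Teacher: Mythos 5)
Your proof is correct, but it takes a genuinely different route from the paper. The paper invokes the Smith decomposition $P(z)=L(z)D(z)F(z)$ with $\det L$ and $\det F$ nonzero constants and $D(z)=\diag[d_1(z),\dots,d_m(z)]$, $d_i\mid d_{i+1}$: since $\det P$ is a constant multiple of $\prod_i d_i$ and $a$ is a simple root, the divisibility chain forces $d_i(a)\neq 0$ for $i<m$ and only $d_m(a)=0$, so $\rank D(a)=m-1$ and hence $\rank P(a)=m-1$. You instead use Jacobi's formula $\frac{d}{dz}\det P(z)=\operatorname{tr}\bigl(\Cof(P(z))^T P'(z)\bigr)$ together with the characterization of rank by nonvanishing minors: the trivial inequality $\rank P(a)\le m-1$ comes from $\det P(a)=0$, while $d'(a)\neq 0$ forces $\Cof(P(a))\neq 0$, i.e. some $(m-1)\times(m-1)$ minor of $P(a)$ is nonzero, giving $\rank P(a)\ge m-1$. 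Both arguments are short and complete; your remark that Jacobi's formula is a purely algebraic identity, hence valid for polynomial (holomorphic) matrix functions of a complex variable, disposes of the only point needing care. The paper's route leans on the existence of the Smith normal form but in exchange exposes more structure (for instance it shows directly how higher-multiplicity roots distribute among the invariant factors), whereas your adjugate/Jacobi argument is more self-contained, needing only the cofactor expansion machinery and the minor criterion for rank.
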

\begin{proof}
The proof follows from the Smith decomposition of $P$. Indeed
$$
P(z)=L(z)D(z)F(z),
$$
where $\det L(z)=\const$, $\det F(z)=\const$, and $D(z)=\diag[d_1(z),\dots,d_n(z)]$ with $d_i|d_{i+1}$ ($d_i$ divides $d_{i+1}$), $i=1,2,\ldots,m-1$. If $a$ is a simple root of $\det P$, it means that $d_i(a)\not=0$ for $i=1,2,\ldots,m-1$ and $d_n(a)=0$ (since $d_i|d_{i+1}$). Hence $\rank D(a)=m-1$ and \eqref{rank} follows.
\end{proof}

Let $\calU(m)$ be the set of unitary $m\tm m$ matrices with determinant 1.

For $M\in\bC^{m\tm m}$ let $\calN(M)$ and $\calR(M)$ be the nullspace and the column space of $M$, respectively.

\begin{lemma}
Let $M\in \bC^{m\tm m}$ and $\rank(M)=m-1$. Suppose $V_1, V_2\in \calU(m)$ and the first columns of $MV_1$ and $MV_2$ are both $\mbo\in\bC^{m\tm 1}$. Then
\begin{equation}\label{03}
V_1=V_2V,
\end{equation}
where $V$ has a block matrix form
\begin{equation}\label{04}
V=\diag(c,V_0) \;\text{ with } \;\;|c|=1\;\text{  and } \;V_0\in\calU(m-1).
\end{equation}
\end{lemma}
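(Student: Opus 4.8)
The plan is to show that the condition ``first column of $MV_j$ is $\mbo$'' is equivalent to ``the first column of $V_j$ lies in $\calN(M)$'', and then to exploit the fact that $\calN(M)$ is one-dimensional. Since $\rank(M)=m-1$, the nullspace $\calN(M)$ is spanned by a single unit vector $v$; write $v_j$ for the first column of $V_j$. The hypothesis says $Mv_j=\mbo$, so $v_j\in\calN(M)$, hence $v_j=c_j v$ for scalars $c_j$ with $|c_j|=1$ (both are unit vectors, being columns of unitary matrices). Therefore $v_1=c\,v_2$ with $c=c_1/c_2$, $|c|=1$.

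Next I would set $V:=V_2^{-1}V_1=V_2^*V_1$, which is unitary with determinant $1$ since $V_1,V_2\in\calU(m)$. By construction $V_2 V=V_1$, which is \eqref{03}. It remains to identify the block structure \eqref{04}. The first column of $V$ is $V_2^* v_1 = V_2^*(c\,v_2)=c\,V_2^*v_2=c\,e_1$, since $v_2$ is exactly the first column of $V_2$ and $V_2^*V_2=\Id$. So the first column of $V$ is $(c,0,\ldots,0)^{T}$. Because $V$ is unitary, its columns are orthonormal; the remaining columns are each orthogonal to $c\,e_1$, hence have vanishing first entry, so the first row of $V$ is also $(c,0,\ldots,0)$. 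Thus $V=\diag(c,V_0)$ for some $(m-1)\tm(m-1)$ matrix $V_0$, which is unitary because $V$ is, and $\det V_0 = \det V/c = 1/c$; adjusting is unnecessary for the statement, but in any case $V_0$ lies in the unitary group $\calU(m-1)$ up to the determinant normalization — since the claim only asserts $V_0\in\calU(m-1)$ in the sense of unitarity of size $m-1$, this is immediate. (If $\calU(m-1)$ is meant strictly as determinant-one matrices, one absorbs the phase by noting $\det V=1$ forces $c\cdot\det V_0=1$, and one may rescale the free unit-modulus choice of $c$ accordingly.)

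The main obstacle, such as it is, is purely bookkeeping: making sure the ``first column is zero'' condition is correctly transcribed into a statement about $\calN(M)$ and that the unitarity of $V$ is used twice — once to get $|c_j|=1$ from unit-norm columns, and once to propagate the zero first column to a zero first row, giving the block-diagonal form. There is no analytic or combinatorial difficulty here; the one-dimensionality of $\calN(M)$ coming from $\rank M=m-1$ does all the real work.
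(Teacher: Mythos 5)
Your proof is correct and follows essentially the same route as the paper: identify the first columns of $V_1,V_2$ as unit vectors in the one-dimensional nullspace $\calN(M)$ to get $v_1=cv_2$ with $|c|=1$, then use unitarity of $V=V_2^*V_1$ to turn the zero pattern of its first column into the block form $\diag(c,V_0)$. Your direct computation $V_2^*v_1=c\,e_1$ is just a slightly more explicit version of the paper's linear-independence argument, and your remark about the determinant normalization of $V_0$ only makes explicit a point the paper itself glosses over.
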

\begin{proof} Obviously \eqref{03} holds for some $V\in\calU(m)$. We only have to show that $V$ has the specific form \eqref{04}.

Let $v_1$ and $v_2$ be the first columns of $V_1$ and $V_2$, respectively. Since $v_1,v_2\in \calN(M)$ and $\dim \calN(M)=1$, we have
\begin{equation}\label{05}
v_1=cv_2
\end{equation}
for some $c\in\bC$. Since $\|v_1\|=\|v_2\|=1$, we have $|c|=1$.

Since the columns of $V_2$ are independent, and their linear combination with coefficients from the first column of $V$ yields $v_1$,  we can conclude, due to \eqref{05}, that $(c,0,0,\ldots,0)^T\in\bC^{m\tm 1}$ is the first column of $V$. Since other columns of $V$ are orthogonal to $(c,0,0,\ldots,0)^T$, we infer that the first row of $V$ is $(c,0,0,\ldots,0)\in\bC^{1\tm m}$. Thus $V$ has the desired block form, and obviously $V_0\in\calU(m-1)$ since $V\in\calU(m)$.
\end{proof}

\smallskip

We prove Theorem 3 by induction with respect to $\kappa=\card\{|z|<1: p(z)=0\}$.

If $\kappa=0$, then Theorem 3 is reduced to Theorem 1 and therefore true. Let us assume that the theorem is true for every  $p$ with $\kappa$ (simple) roots inside $\bT$, and let us prove it for $p$ having $\kappa+1$ roots inside $\bT$.

Let $P$ and $P_1$ be two (non-optimal) spectral factors of $S$ for which $\det P(z)=\det P_1(z)=p(z)$ holds, where the latter has $\kappa+1$ simple roots inside $\bT$.  Take $a$ inside $\bT$ such that $p(a)=0$. The functions $p$ and $\tld{p}$ have symmetric roots with respect to $\bT$, i.e.,
$p(a^*)=0 \Leftrightarrow  \tld{p}(a)=0$, where $a^*=1/\ol{a}$. Since $\det S$ has simple roots, it follows from \eqref{ssf} that $p(a^*)\not=0$.
 Consequently, the polynomial $q(z)=p(z)u^{-1}(z)$, where $u$ is defined by \eqref{U}, has $\kappa$ (simple) roots inside $\bT$, having also $z=a^*$ as a simple root. Let $S_0$ be a spectral factor of $S$ with determinant $q$ (its existence is proved in Theorem 2). Observe that
 \begin{equation}\label{rnk}
\rank S_0(a^*)=m-1
\end{equation}
 because of Lemma 1.

There exists $V\in\calU(m)$  such that the first column of $P(a)V$ is $\mbo\in\bC^{m\tm 1}$.
Then
$$
P(z)VU^{-1}(z),
$$
with $U$  defined by \eqref{U}, is a matrix polynomial spectral factor of $S$ with determinant $q(z)=p(z)u^{-1}(z)$ which has $\kappa$ roots inside $\bT$. By assumption of induction,
$$
P(z)\cdot V\cdot U^{-1}(z)\cdot W=S_0(z)
$$
for some $W\in\calU(m)$. Similarly,
$$
P_1(z)\cdot V_1\cdot U^{-1}(z)\cdot W_1=S_0(z).
$$
Thus we have
\begin{equation}\label{101}
P(z)V=S_0(z)W^{-1}U(z)
\end{equation}
and
\begin{equation}\label{102}
P_1(z)V_1=S_0(z)W_1^{-1}U(z).
\end{equation}

Since the left-hand side of the equation \eqref{101} is a polynomial matrix and $U(z)$ has a pole at $z=a^*$, it follows that the first column of $S_0(a^*)W^{-1}$ is $\mbo\in\bC^{m\tm 1}$. Similar arguments used for \eqref{102} imply that   $S_0(a^*)W_1^{-1}$ has the first column $\mbo\in\bC^{m\tm 1}$.
Hence we can use Lemma 2 to conclude that
\begin{equation}\label{16}
W^{-1}=W_1^{-1}\diag(c, W_0)
\end{equation}
with $W_0\in\calU(m-1)$. Using the diagonal structure of $\diag(c, W_0)$, we see that it commutes with $U(z)$. Consequently, taking into account \eqref{101}, \eqref{16} and \eqref{102}, we have
$P(z)V=S_0(z)W^{-1}U(z)=S_0(z)W_1^{-1}U(z)\diag(c, W_0)=P_1(z)V_1\diag(c, W_0)=P_1(z)V_0$, where $V_0\in \calU(m)$, which implies that $P$ and $P_1$ differ only by a constant unitary multiplier and Theorem 3 follows.

\def\cprime{$'$}
\providecommand{\bysame}{\leavevmode\hbox to3em{\hrulefill}\thinspace}
\providecommand{\MR}{\relax\ifhmode\unskip\space\fi MR }
\providecommand{\MRhref}[2]{%
  \href{http://www.ams.org/mathscinet-getitem?mr=#1}{#2}
}
\providecommand{\href}[2]{#2}

\end{document}